\documentclass[draft,a4,reqno,11pt]{amsart}
\usepackage{verbatim}
\usepackage{amssymb,amsmath,amsthm}
\usepackage{amsfonts}
\usepackage{array}

\textwidth=16cm
\oddsidemargin=0.5cm
\evensidemargin=0.5cm
\textheight=21cm

\newcommand{\qbin}[2]{\genfrac{[}{]}{0pt}{}{#1}{#2}_{\zeta_n}}
\newcommand{\qqbin}[2]{\genfrac{[}{]}{0pt}{}{#1}{#2}_q}

\newtheorem{theorem}{Theorem}[section]
\newtheorem{lemma}{Lemma}[section]
\newtheorem{corollary}{Corollary}[section]
\newtheorem{conjecture}{Conjecture}[section]

\begin{document}

\title{On $3-2-1$ values of finite multiple harmonic $q$-series at roots of unity}

\author[\tiny Kh.~Hessami Pilehrood]{Kh.~Hessami Pilehrood}
\address{The Fields Institute for Research in Mathematical Sciences, 222 College St, Toronto, Ontario M5T 3J1 Canada}
\email{hessamik@gmail.com}

\author{T.~Hessami Pilehrood}
\address{The Fields Institute for Research in Mathematical Sciences, 222 College St, Toronto, Ontario M5T 3J1 Canada}
\email{hessamit@gmail.com}

\author{R.~Tauraso}
\address{Dipartimento di Matematica, 
Universit\`a di Roma ``Tor Vergata'', 
via della Ricerca Scientifica, 
00133 Roma, Italy}
\email{tauraso@mat.uniroma2.it}

\subjclass[2010]{11M32, 11M35, 05A15, 30B10, 30D05}
\keywords{Multiple harmonic sums, $q$-analogs, roots of unity}

\begin{abstract} We mainly answer two open questions about finite multiple harmonic $q$-series on 3-2-1 indices at roots of unity, posed recently by H.~Bachmann, Y. Takeyama, and K. Tasaka. 
Two conjectures regarding cyclic sums which generalize the given results are also provided.
\end{abstract}

\maketitle

\section{Introduction.}

For two $r$-tuples of non-negative integers ${\bf s} = (s_1, \ldots, s_r)$ and  ${\bf t} = (t_1, \ldots, t_r)$ and a positive integer $n$, with a complex number $q$
satisfying $q^m\ne 1$ for $n>m>0$, we define two classes of multiple $q$-harmonic sums
\begin{align*}
H_n({\bf s}; {\bf t}; q)=H_n(s_1, \ldots, s_r; t_1, \ldots, t_r;q) & = 
\sum_{1\le k_1<\cdots<k_r\le n}\frac{q^{k_1t_1+\cdots+k_rt_r}}{[k_1]_q^{s_1}\cdots[k_r]_q^{s_r}}, \\
H_n^\star({\bf s}; {\bf t}; q)=H_n^\star(s_1, \ldots, s_r; t_1, \ldots, t_r;q) & = 
\sum_{1\le k_1\le\cdots\le k_r\le n}\frac{q^{k_1t_1+\cdots+k_rt_r}}{[k_1]_q^{s_1}\cdots[k_r]_q^{s_r}},
\end{align*}
where 
$$
[n]_q=\frac{1-q^n}{1-q}=1=q+\cdots+q^{n-1}
$$
is a $q$-analog of positive integer $n$. By convention, we put $H_n(\emptyset)=H_n^\star(\emptyset)=1$, and $H_n({\bf s}; {\bf t}; q)=0$ if $n<r$.
The number $w({\bf s})=\sum_{j=1}^rs_j$ is called the weight of the multiple harmonic sum.

 For a primitive $n$-th root of unity $\zeta_n$, following  work \cite{BTT:20}, we adopt the  notation 
\begin{align*}
z_n({\bf s}; \zeta_n)&=
H_{n-1}({\bf s}; {\bf s}-\{1\}^r; \zeta_n), \\
z_n^\star({\bf s}; \zeta_n)&= 
H_{n-1}^\star({\bf s}; {\bf s}-\{1\}^r; \zeta_n),
\end{align*}
where 
$\{a\}^r$ denotes an $r$-tuple with $r$ consecutive copies of the letter $a$ (note that we reversed the order of summation for convenience in our settings).

In \cite{BTT:20}, Bachmann, Takeyama, and Tasaka studied special values of $z_n(\{k\}^r; \zeta_n)$ and in particular for $k=1, 2, 3$, showed that
\begin{align} 
z_n(\{1\}^r; \zeta_n)&=\frac{1}{n}\binom{n}{r+1}(1-\zeta_n)^r,  \label{1} \\
z_n(\{2\}^r; \zeta_n)&=\frac{(-1)^r}{n(r+1)}\binom{n+r}{2r+1}(1-\zeta_n)^{2r},  \label{2} \\
z_n(\{3\}^r; \zeta_n)&=\frac{1}{n^2(r+1)}\left(\binom{n+2r+1}{3r+2}+(-1)^r\binom{n+r}{3r+2}\right)(1-\zeta_n)^{3r}.  \label{3}
\end{align}
The authors of \cite{BTT:20} also formulated two open questions for finite multiple harmonic $q$-series $z_n$ on 3-2-1 indices, namely,
\begin{equation} \label{4}
\begin{split}
z_n(\{1\}^a, 2, \{1\}^b; \zeta_n)+z_n(\{1\}^b,2,\{1\}^a; \zeta_n) &\stackrel{?}{=}-\frac{1}{n}\binom{n+1}{a+b+3}(1-\zeta_n)^{a+b+2}, \\[3pt]
z_n(\{2\}^a, 3, \{2\}^b; \zeta_n)+z_n(\{2\}^b,3,\{2\}^a; \zeta_n) &\stackrel{?}{=} \frac{(-1)^{a+b}}{n(a+b+2)}\binom{n+a+b+1}{2(a+b)+3}(1-\zeta_n)^{2(a+b)+3}.
\end{split}
\end{equation}
In this paper,  we prove the above relations and obtain related formulas for corresponding values of $\xi({\bf s})$, which are defined as the limit values (see \cite[Thm.~1.2]{BTT:18})
$$
\xi({\bf s})=\lim_{n\to\infty}z_n({\bf s}; e^{\frac{2\pi i}{n}}).
$$
Note that when $n$ is a prime, formulas (\ref{4}) as well as (\ref{2}) and (\ref{3})  follow from our results on $q$-congruences for multiple $q$-harmonic sums
\cite[Thm.~4.1, Thm.~5.1, Thm.~6.1, and Thm.~8.3]{HPT}, while formula (\ref{1}) follows from \cite[Cor.~2.2]{Zh13}.  The methods of our paper \cite{HPT} can be easily adjusted to prove (\ref{4}) for arbitrary positive integer $n$.

\begin{theorem} \label{T1}
For all non-negative integers $a, b$ and any $n$-th primitive root of unity $\zeta_n$, 
\begin{equation*}
z_n(\{2\}^a, 3, \{2\}^b; \zeta_n)+z_n(\{2\}^b,3,\{2\}^a; \zeta_n) =\frac{(-1)^{a+b}}{n(a+b+2)}\binom{n+a+b+1}{2(a+b)+3}(1-\zeta_n)^{2(a+b)+3}.
\end{equation*}
\end{theorem}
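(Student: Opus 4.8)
The plan is to reduce the symmetric $3$-$2$-$1$ combination to the already-known diagonal value $z_n(\{2\}^{a+b+1};\zeta_n)$ of formula \eqref{2}, by exploiting the reflection $k\mapsto n-k$ on the summation indices. Write $c=a+b$ and $q=\zeta_n$, and unfold the definitions: since ${\bf s}-\{1\}^{c+1}$ carries a single $2$ in the slot occupied by the letter $3$,
\[
z_n(\{2\}^a,3,\{2\}^b;\zeta_n)=\sum_{1\le k_1<\cdots<k_{c+1}\le n-1}\frac{q^{k_1+\cdots+k_{c+1}}\,q^{k_{a+1}}}{[k_{a+1}]_q\prod_{j=1}^{c+1}[k_j]_q^2}.
\]
The whole argument rests on the elementary root-of-unity identity $[n-k]_{\zeta_n}=-\zeta_n^{-k}[k]_{\zeta_n}$, which follows at once from $q^n=1$ and $q^{n-k}=q^{-k}$.

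First I would apply the order-reversing bijection $k_j\mapsto n-l_{c+2-j}$, which maps strictly increasing $(c+1)$-tuples in $\{1,\dots,n-1\}$ to themselves, to the \emph{second} summand $z_n(\{2\}^b,3,\{2\}^a;\zeta_n)$, whose distinguished slot sits at position $b+1$ and is sent to position $a+1$. Using $q^{\sum_j(n-l_j)}=q^{-\sum_j l_j}$, $[n-l]_q^2=\zeta_n^{-2l}[l]_q^2$, and $[n-l]_q=-\zeta_n^{-l}[l]_q$ in the single extra denominator, all powers of $\zeta_n$ cancel apart from one overall sign, leaving
\[
z_n(\{2\}^b,3,\{2\}^a;\zeta_n)=-\sum_{1\le l_1<\cdots<l_{c+1}\le n-1}\frac{q^{l_1+\cdots+l_{c+1}}}{[l_{a+1}]_q\prod_{j=1}^{c+1}[l_j]_q^2},
\]
i.e.\ exactly the shape of the first summand but with the numerator factor $q^{l_{a+1}}$ deleted (and a global minus sign). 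This reflection step is the crux of the proof.

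Adding this to the first summand, the two series now run over identical index sets and differ only in the distinguished slot $p=a+1$, so they combine into
\[
\sum_{1\le l_1<\cdots<l_{c+1}\le n-1}\frac{q^{l_1+\cdots+l_{c+1}}\,(q^{l_{a+1}}-1)}{[l_{a+1}]_q\prod_{j=1}^{c+1}[l_j]_q^2}.
\]
Here the second elementary identity $\dfrac{q^{m}-1}{[m]_q}=-(1-q)$ makes the distinguished slot disappear entirely, yielding
\[
z_n(\{2\}^a,3,\{2\}^b;\zeta_n)+z_n(\{2\}^b,3,\{2\}^a;\zeta_n)=-(1-\zeta_n)\,z_n(\{2\}^{c+1};\zeta_n).
\]
Finally I substitute the closed form \eqref{2} for $z_n(\{2\}^{c+1};\zeta_n)=z_n(\{2\}^{a+b+1};\zeta_n)$ and simplify: the extra factor $-(1-\zeta_n)$ raises $(1-\zeta_n)^{2(c+1)}$ to $(1-\zeta_n)^{2c+3}$ and turns $(-1)^{c+1}$ into $(-1)^{c}$, reproducing the claimed right-hand side.

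I expect the only genuinely delicate point to be the bookkeeping in the reflection step, where one must track how the exponent of $\zeta_n$ coming from the numerator, from the squared denominators $[n-l]_q^2$, and from the single extra denominator $[n-l]_q$ recombine so that every $\zeta_n$-power cancels apart from the stated global sign. It is worth emphasizing that the final answer depends only on $c=a+b$, and the argument explains this conceptually: the reflection trades the position $a+1$ for $b+1$, while the collapsing identity $\frac{q^m-1}{[m]_q}=-(1-q)$ is position-independent, so the precise placement of the letter $3$ among the $2$'s becomes immaterial once the two reversed copies are added.
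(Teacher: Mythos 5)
Your proof is correct and takes essentially the same route as the paper: both arguments rest on the root-of-unity reflection $[n-k]_{\zeta_n}=-\zeta_n^{-k}[k]_{\zeta_n}$ together with a termwise collapsing identity to reduce the symmetric combination to $-(1-\zeta_n)\,z_n(\{2\}^{a+b+1};\zeta_n)$, and then both invoke formula (\ref{2}). The only difference is organizational --- the paper first adds $(1-\zeta_n)z_n(\{2\}^{a+b+1};\zeta_n)$ to one summand and then applies its reflection lemma, whereas you reflect the second summand first and then collapse --- but the content is identical.
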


\begin{theorem} \label{T2}
For all non-negative integers $a, b$ and any $n$-th primitive root of unity $\zeta_n$, 
\begin{equation*}
z_n(\{1\}^a, 2, \{1\}^b; \zeta_n)+z_n(\{1\}^b,2,\{1\}^a; \zeta_n) = -\frac{1}{n}\binom{n+1}{a+b+3}(1-\zeta_n)^{a+b+2}.
\end{equation*}
\end{theorem}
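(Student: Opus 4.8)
Throughout write $q=\zeta_n$. The plan is to pass to a two-variable generating function, collapse it to a polynomial identity by exploiting the cyclotomic structure of $q$, and then read off the binomial coefficient. Expanding the definition, the ``$2$'' in the middle produces a factor $q^m/[m]_q^2$ at its index $m=k_{a+1}$, while each ``$1$'' contributes an unweighted factor, so
\[
z_n(\{1\}^a,2,\{1\}^b;\zeta_n)=\sum_{m=1}^{n-1}\frac{q^m}{[m]_q^2}\,\tilde A_a(m)\,\tilde B_b(m),
\]
where $\tilde A_a(m)$ and $\tilde B_b(m)$ are the elementary symmetric sums of $1/[k]_q$ over $k<m$ and over $k>m$. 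Assembling $F(x,y)=\sum_{a,b\ge0}z_n(\{1\}^a,2,\{1\}^b)\,x^ay^b$ factors the inner sums into partial products, and using $1+x/[j]_q=(v-q^j)/(1-q^j)$ with $v=1+(1-q)x$, together with the two cyclotomic evaluations $\prod_{j=1}^{n-1}(1-q^j)=n$ and $\prod_{j=1}^{n-1}(x-q^j)=\Phi(x):=(x^n-1)/(x-1)$ (the generating-function form of $(1)$), I would collapse this to $F(x,y)=\tfrac{(1-q)^2}{n}\,\Sigma(v,w)$ with
\[
\Sigma(v,w)=\sum_{m=1}^{n-1}\frac{q^m}{1-q^m}\,P_m^{<}(v)\,Q_m^{>}(w),\qquad P_m^{<}(v)=\prod_{j<m}(v-q^j),\quad Q_m^{>}(w)=\prod_{j>m}(w-q^j),
\]
and $w=1+(1-q)y$. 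The symmetrized statement of Theorem~\ref{T2} is then equivalent to showing that $\Sigma(v,w)+\Sigma(w,v)$ is a divided difference.

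The core is the following \textbf{Key Lemma}: $\Sigma(v,w)+\Sigma(w,v)=-\dfrac{\Phi_1(v)-\Phi_1(w)}{v-w}$, where $\Phi_1(v):=\dfrac{v\Phi(v)-n}{v-1}$. The engine is the telescoping identity $R_m(v,w)-R_{m-1}(v,w)=(v-w)\,P_m^{<}(v)Q_m^{>}(w)$, where $R_m(v,w)=\prod_{j\le m}(v-q^j)\prod_{j>m}(w-q^j)$, with $R_0=\Phi(w)$ and $R_{n-1}=\Phi(v)$; for the trivial weight this at once gives $\sum_m P_m^{<}(v)Q_m^{>}(w)=\frac{\Phi(v)-\Phi(w)}{v-w}$. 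To absorb the genuine weight I would use the partial-fraction identity $\frac{q^m}{(1-q^m)(v-q^m)}=\frac1{v-1}\bigl(\frac1{1-q^m}-\frac{v}{v-q^m}\bigr)$, which isolates the telescoping quantity $P_m^{<}Q_m^{>}=R_m/(v-q^m)$, followed by summation by parts against $\gamma_m=1/(1-q^m)$, using $\gamma_m-\gamma_{m+1}=\frac{q^m(1-q)}{(1-q^m)(1-q^{m+1})}$ and the boundary values $\gamma_1=1/(1-q)$, $\gamma_{n-1}=-q/(1-q)$ (since $q^{n-1}=q^{-1}$).

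The \emph{main obstacle} is that summation by parts does not close by itself: it re-expresses the weighted sum through a residual sum $\sum_m \beta_m\bigl(R_m(v,w)-R_m(w,v)\bigr)$, with $\beta_m=\gamma_m-\gamma_{m+1}$, of exactly the same shape, so the recursion is circular and the Lemma cannot follow from telescoping alone. Breaking the circularity demands genuine root-of-unity input, namely the reflection $\frac1{1-q^m}+\frac1{1-q^{n-m}}=1$ (equivalently $q^n=1$), which is compatible with the weight-invariance $\frac{q^m}{[m]_q^2}=\frac{q^{n-m}}{[n-m]_q^2}$ coming from $[n-m]_q=-q^{-m}[m]_q$. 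I expect the real work to lie in combining this reflection with the telescoping to evaluate the residual sum to $\Xi(v)-\Xi(w)$, where $\Xi(v)=\frac{n-\Phi(v)}{v-1}=\Phi-\Phi_1$; I have verified the Lemma directly for $n=2,3$, where it reduces precisely to the cyclotomic relation $\Phi(q)=0$.

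Granting the Lemma, the extraction is routine. Since $\Phi_1(v)-\binom{n+1}{2}=\Theta(v)/(v-1)^2$ with $\Theta(v)=v^{n+1}-\sum_{k\le2}\binom{n+1}{k}(v-1)^k=\sum_{k\ge3}\binom{n+1}{k}(v-1)^k$, and since $v-1=(1-q)x$, $w-1=(1-q)y$, $v-w=(1-q)(x-y)$, the Lemma gives
\[
F(x,y)+F(y,x)=-\frac1n\sum_{k\ge3}\binom{n+1}{k}(1-q)^{k-1}\,\frac{x^{k-2}-y^{k-2}}{x-y}.
\]
As $\frac{x^{k-2}-y^{k-2}}{x-y}=\sum_{a+b=k-3}x^ay^b$, taking $k=a+b+3$ shows the coefficient of $x^ay^b$ is $-\frac1n\binom{n+1}{a+b+3}(1-q)^{a+b+2}$, which is the asserted value with $q=\zeta_n$; comparing with the definition of $F$ completes the proof.
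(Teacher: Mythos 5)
Your reductions are all correct as far as they go: the expansion of $z_n(\{1\}^a,2,\{1\}^b;\zeta_n)$ with the single weighted factor $q^m/[m]_q^2$ at the middle index, the factorization of $F(x,y)$ via $1+x/[j]_q=(v-q^j)/(1-q^j)$, the evaluations $\prod_{j=1}^{n-1}(1-q^j)=n$ and $\prod_{j=1}^{n-1}(x-q^j)=(x^n-1)/(x-1)$, and the final coefficient extraction from the divided difference of $\Phi_1$ all check out. But these reductions are exactly reversible, so your Key Lemma is not a stepping stone toward Theorem~\ref{T2} --- it \emph{is} Theorem~\ref{T2}, restated in generating-function form. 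And the Key Lemma is never proven: by your own account, the telescoping identity $R_m-R_{m-1}=(v-w)P_m^{<}Q_m^{>}$ combined with partial fractions and Abel summation leads back to a residual sum ``of exactly the same shape,'' so the argument is circular, and you only \emph{expect} that the reflection $\frac{1}{1-q^m}+\frac{1}{1-q^{n-m}}=1$ will close it. Verifying $n=2,3$ is not a substitute. So the entire arithmetic content of the theorem --- the place where the root-of-unity structure must enter beyond the two product evaluations --- is missing; what you have established is a (correct) reformulation, not a proof.

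For comparison, the paper supplies precisely the input your argument lacks, in the form of an external duality: Theorem~A of \cite{HPT} (restated as Theorem~\ref{T3}), which rewrites $z_n(\{1\}^a,2,\{1\}^b;\zeta_n)$ as $(-1)^{a+b}$ times a sum of $\prod_i \zeta_n^{j_i}/[j_i]_{\zeta_n}$ over a partially strict chain $j_1<\dots<j_{a+1}\le j_{a+2}<\dots<j_w<n$. The substitution $j_i\mapsto n-j_i$ together with $[n-j]_{\zeta_n}=-\zeta_n^{-j}[j]_{\zeta_n}$ (identity (\ref{6})) then strips off all the $\zeta_n$-powers, and the splitting $1/[j]_{\zeta_n}^2=(1-\zeta_n)/[j]_{\zeta_n}+\zeta_n^{j}/[j]_{\zeta_n}^2$ regenerates $z_n(\{1\}^b,2,\{1\}^a;\zeta_n)$ plus the terms $(1-\zeta_n)z_n(\{1\}^{w-1};\zeta_n)$ and $z_n(\{1\}^w;\zeta_n)$, which are evaluated by (\ref{1}). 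If you want to rescue your route, the analogue of that duality (or an independent evaluation of your residual sum $\sum_m\beta_m\bigl(R_m(v,w)-R_m(w,v)\bigr)$) is what has to be proved; alternatively one could try to pin down $\Sigma(v,w)+\Sigma(w,v)$ as a polynomial identity by degree-counting and interpolation, but as written your proposal leaves the central claim unestablished.
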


\noindent The complex numbers $\xi({\bf s})$ are of interest in view of their connections to the finite and symmetric multiple zeta values as was shown in \cite{BTT:18}.
After letting $\zeta_n=e^{\frac{2\pi i}{n}}$ in Theorem~\ref{T1} and Theorem~\ref{T2}, and by noting that for $j,k\in\mathbb{N}$,
$$\lim_{n\to\infty}\binom{n+j}{k}\frac{k!}{n^k}=1,\quad\text{and}\quad\lim_{n\to\infty}n(1-e^{\frac{2\pi i}{n}})=-2\pi i,$$
we obtain the following corollary.

\begin{corollary} \label{C1}
For all non-negative integers $a, b$,
\begin{equation*}
\xi(\{1\}^a, 2, \{1\}^b)+\xi(\{1\}^b,2,\{1\}^a) = -\frac{(-2\pi i)^{a+b+2}}{(a+b+3)!},
\end{equation*}
and
\begin{equation*}
\xi(\{2\}^a, 3, \{2\}^b)+\xi(\{2\}^b,3,\{2\}^a) 
=0.
\end{equation*}
\end{corollary}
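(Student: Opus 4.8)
The plan is to obtain Corollary~\ref{C1} directly from Theorem~\ref{T1} and Theorem~\ref{T2} by specializing $\zeta_n = e^{2\pi i/n}$ and passing to the limit $n\to\infty$, using the two asymptotic relations recorded just above the statement. Since the existence of each individual limit $\xi(\mathbf{s})$ is guaranteed by \cite[Thm.~1.2]{BTT:18}, the limit of a sum of two such $z_n$ values equals the sum of the corresponding $\xi$'s; thus it suffices to compute the limit of the right-hand side of each identity.

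First I would treat the weight-one case. Writing $k = a+b+3$, the right-hand side of Theorem~\ref{T2} with $\zeta_n = e^{2\pi i/n}$ reads
$$-\frac{1}{n}\binom{n+1}{k}\left(1 - e^{2\pi i/n}\right)^{k-1}.$$
Applying $\binom{n+1}{k} \sim n^k/k!$ and $\bigl(1 - e^{2\pi i/n}\bigr)^{k-1} \sim (-2\pi i)^{k-1}/n^{k-1}$, the powers of $n$ balance exactly: the factor $n^k$ from the binomial cancels against $n\cdot n^{k-1}$ in the denominator, leaving the finite value $-(-2\pi i)^{k-1}/k! = -(-2\pi i)^{a+b+2}/(a+b+3)!$. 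This yields the first assertion.

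For the weight-three case I would carry out the same bookkeeping for Theorem~\ref{T1}. Setting $m = a+b$ and $K = 2m+3$, the right-hand side becomes
$$\frac{(-1)^m}{n(m+2)}\binom{n+m+1}{K}\left(1 - e^{2\pi i/n}\right)^{K}.$$
Here $\binom{n+m+1}{K} \sim n^{K}/K!$ and $\bigl(1-e^{2\pi i/n}\bigr)^{K} \sim (-2\pi i)^{K}/n^{K}$, so the two factors of $n^{K}$ cancel, but an unbalanced factor $1/n$ survives in front. Hence the whole expression tends to $0$, giving the second assertion.

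The computation is essentially routine once the two stated limits are in hand; the only point requiring real attention is the discrepancy in the power of $n$ between the two theorems. In Theorem~\ref{T2} the explicit $1/n$ prefactor is compensated by a binomial of top index $k$ against a $(k-1)$-st power of $(1-\zeta_n)$, producing a nonzero limit, whereas in Theorem~\ref{T1} the binomial of top index $K$ is matched against a $K$-th power of $(1-\zeta_n)$, so the $1/n$ prefactor is left uncancelled and forces the limit to vanish. This asymmetry in the degree count is the essential content of the corollary, and I expect it to be the only step where care is needed.
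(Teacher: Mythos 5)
Your proposal is correct and is essentially identical to the paper's own argument: the paper obtains Corollary~\ref{C1} precisely by setting $\zeta_n=e^{2\pi i/n}$ in Theorems~\ref{T1} and~\ref{T2} and applying the two stated limits $\lim_{n\to\infty}\binom{n+j}{k}k!/n^k=1$ and $\lim_{n\to\infty}n(1-e^{2\pi i/n})=-2\pi i$, with the same power-of-$n$ bookkeeping (balanced degrees giving $-(-2\pi i)^{a+b+2}/(a+b+3)!$ in the first identity, an uncancelled $1/n$ forcing $0$ in the second). Your additional remark that the existence of each individual limit $\xi(\mathbf{s})$ rests on \cite[Thm.~1.2]{BTT:18} is a correct and welcome precision, implicit in the paper's definition of $\xi$.
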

\noindent  Note that the last relation can also be readily obtained from the definition of the symmetric multiple zeta values (see, for example, \cite[Def.~2.5]{BTT:18}).

Finally, we put forward the following conjectures regarding cyclic sums of multiple $q$-harmonic sums $z_n$ at roots of unity, which generalize both of the theorems above.

\begin{conjecture}[Cyclic-sum]
Let $d_0,d_1,\dots,d_{t}$ be non-negative integers. Then

\begin{itemize}

\item[(i)] For every integer $n>r$, where $r=\sum_{j=0}^{t}d_j+2t$, and any primitive root of unity $\zeta_n$,
\begin{equation*}
\sum_{j=0}^{t}z_n\left(
\{1\}^{d_j},2,\{1\}^{d_{j+1}},2,\dots,2,\{1\}^{d_{j+t}}
\right)=
\frac{(-1)^t}{n}\binom{n+t}{r+1}(1-\zeta_n)^{r}.
\end{equation*}

\item[(ii)] For every integer $n>r$, where $r=\sum_{j=0}^{t}2d_j+3t$, and any primitive root of unity $\zeta_n$,
\begin{equation*}
\sum_{j=0}^{t}z_n\left(
\{2\}^{d_j},3,\{2\}^{d_{j+1}},3,\dots,3,\{2\}^{d_{j+t}}
\right)\in (1-\zeta_n)^r{\mathbb Q}.
\end{equation*}
\end{itemize}
In both sums above it is understood that $d_j=d_k$ if $j\equiv k$ modulo $t+1$.
\end{conjecture}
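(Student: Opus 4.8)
The plan is to reduce both parts to a single mechanism: condition on the positions of the ``marked'' letters and then exploit a cyclic symmetry forced by evaluation at a root of unity. First I would fix the positions $1\le m_1<\cdots<m_t\le n-1$ of the entries equal to $2$ in part~(i) (resp. equal to $3$ in part~(ii)) and factor each summand over the gaps they create,
\[
z_n\!\left(\{1\}^{d_0},2,\dots,2,\{1\}^{d_t}\right)=\sum_{0=m_0<m_1<\cdots<m_t<m_{t+1}=n}\ \prod_{i=1}^{t}\frac{\zeta_n^{m_i}}{[m_i]_{\zeta_n}^{2}}\ \prod_{i=0}^{t}E_{d_i}(m_i,m_{i+1}),
\]
where $E_d(u,v)=e_d\bigl(1/[u+1]_{\zeta_n},\dots,1/[v-1]_{\zeta_n}\bigr)$ is the $d$-th elementary symmetric function of the reciprocals $1/[k]_{\zeta_n}$ over the gap $u<k<v$ (with the obvious weight-$2$ analogue for part~(ii)). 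Introducing variables $x_0,\dots,x_t$ marking the block sizes, the gap factors assemble into partial products $P(u,v;x)=\prod_{u<k<v}(1+x/[k]_{\zeta_n})$, and the cyclic sum becomes the cyclic symmetrization of $x_0,\dots,x_t$. The single global input I would use is the product identity obtained by reading \eqref{1} as a generating function,
\[
\prod_{k=1}^{n-1}\Bigl(1+\frac{x}{[k]_{\zeta_n}}\Bigr)=\frac{(1+(1-\zeta_n)x)^n-1}{n(1-\zeta_n)x},
\]
together with its weight-$2$ counterpart extracted from \eqref{2}.

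The heart of the argument is to turn the linear chain of $t+1$ gaps into a genuine cycle. The two boundary gaps $(0,m_1)$ and $(m_t,n)$ are not interchangeable with the interior gaps, which is exactly what obstructs naive cyclic symmetry once $t\ge 2$. I would repair this using the reversal identity at a root of unity,
\[
[n-k]_{\zeta_n}=-\zeta_n^{-k}\,[k]_{\zeta_n},
\]
to reflect the last partial product $P(m_t,n;x)$ onto a front-anchored range, so that the $t+1$ gaps become the $t+1$ arcs cut out by $m_1,\dots,m_t$ on the cyclic index set $\mathbb{Z}/n\mathbb{Z}$. After this reflection the cyclic shift of $d_0,\dots,d_t$ is literally rotation of the arcs, and the symmetrized product over all gaps telescopes back to the full-range product displayed above. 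Note that for $t=1$ rotation of two blocks coincides with reversing the index, so this step degenerates to the reversal symmetrization already present in Theorems~\ref{T1} and~\ref{T2}; for $t\ge 2$ rotation is genuinely different and the cyclic telescoping is essential.

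With the cyclic structure in place I would sum over the marked positions, evaluate by the product identity, and extract the coefficient of $\prod_i x_i^{d_i}$ by binomial expansion (or Lagrange inversion) to recover $\tfrac{(-1)^t}{n}\binom{n+t}{r+1}(1-\zeta_n)^r$ in part~(i); this forces the cyclic sum to depend only on $\sum_j d_j$, a consistency check worth verifying early. For part~(ii) no explicit constant is asserted, so instead of computing it I would observe that the telescoped cyclic sum is built entirely from symmetric-function evaluations of the kind underlying \eqref{1}--\eqref{3}, each of which is at $\zeta_n$ a rational multiple of a power of $(1-\zeta_n)$; tracking the weights contributed by the marked factors $\zeta_n^{2m_i}/[m_i]_{\zeta_n}^{3}$ and by the $2$-blocks should show the total power is exactly $r$, giving membership in $(1-\zeta_n)^r\mathbb{Q}$. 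As a backup, rationality also follows by showing the normalized sum is fixed under $\zeta_n\mapsto\zeta_n^{c}$ for all $\gcd(c,n)=1$, since that automorphism sends $z_n(\mathbf{s};\zeta_n)$ to $z_n(\mathbf{s};\zeta_n^{c})$ and the reflection identity intertwines the Galois action with the same cyclic recombination.

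The hardest step will be the wrap-around itself. The reflection produces $\zeta_n^{\pm k}$ twists that must be tracked so that the reflected boundary product matches the interior products exactly, and one must check that the absent index $k=n\equiv 0$, where $[n]_{\zeta_n}=0$, does not obstruct the gluing---this is the subtlety that makes the root-of-unity case cleaner than it first appears but also easy to get wrong. A secondary obstacle, specific to $t\ge 2$, is proving that the cyclic (rather than fully symmetric) sum collapses to a function of $\sum_j d_j$ alone, which genuinely requires the cyclic telescoping and does not reduce to reversal. Establishing this rigorously, and for part~(ii) confirming that the power of $(1-\zeta_n)$ is precisely $r$ with no cancellation, is where most of the work will lie.
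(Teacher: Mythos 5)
This statement is a \emph{conjecture} in the paper: the authors give no proof, observing only that the case $t=1$ follows from Theorems~\ref{T1} and \ref{T2}, and that the case where all $d_j$ vanish follows from \eqref{2} and \eqref{3}. Your proposal must therefore stand on its own, and it does not: it is an outline whose central step is missing, and the mechanism you propose for that step cannot work. Your preliminary reductions are fine --- the block decomposition over the positions $m_1<\cdots<m_t$ of the $2$'s is correct, and so is the product formula $\prod_{k=1}^{n-1}\bigl(1+x/[k]_{\zeta_n}\bigr)=\bigl((1+(1-\zeta_n)x)^n-1\bigr)/\bigl(n(1-\zeta_n)x\bigr)$, which is just \eqref{1} rewritten. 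The gap is the ``cyclic telescoping.'' Identity \eqref{6} realizes exactly one symmetry of $\mathbb{Z}/n\mathbb{Z}$: the reflection $k\mapsto n-k$ about $0$, which on indices is reversal. That is why it handles $t=1$ (with two blocks, rotation and reversal coincide, which is precisely what the proofs of Theorems~\ref{T1} and \ref{T2} exploit), and why it cannot give more: a genuine rotation of the $t+1$ arcs would require relating $[k+m]_{\zeta_n}$, or a reflection $[m-k]_{\zeta_n}$ about a marked point $m=m_i\neq 0$, back to $[k]_{\zeta_n}$, and no such identity exists --- $1-\zeta_n^{\,m-k}$ is not a $\zeta_n$-monomial multiple of $1-\zeta_n^{k}$ unless $m\equiv 0 \pmod n$. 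The point $0$, where $[n]_{\zeta_n}=0$, breaks translation invariance on $\mathbb{Z}/n\mathbb{Z}$; what you call a subtlety about the ``absent index'' is in fact the precise obstruction. Composing reflections about $0$ only ever returns reversal, so for $t\ge 2$ the cyclic orbit is not reached, and the assertion that the symmetrized gap products ``telescope back to the full-range product'' is the content of the conjecture itself, not a step toward it.

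Part (ii) adds two circular arguments on top of this. The individual terms $z_n(\{2\}^{d_j},3,\ldots;\zeta_n)$ are elements of $\mathbb{Q}(\zeta_n)$ that are in general \emph{not} rational multiples of powers of $1-\zeta_n$; only the cyclic sum is conjectured to be. So ``tracking the weights'' of the constituent factors cannot yield membership in $(1-\zeta_n)^r\mathbb{Q}$: the rationality must come from cancellation across the cyclic sum, which is exactly what needs proof. The Galois fallback is the statement in disguise: writing $S$ for the cyclic sum, rationality of $S/(1-\zeta_n)^r$ is \emph{equivalent} to its invariance under every automorphism $\zeta_n\mapsto\zeta_n^c$ with $\gcd(c,n)=1$, i.e.\ to $S(\zeta_n^c)/(1-\zeta_n^c)^r=S(\zeta_n)/(1-\zeta_n)^r$, and nothing in your outline establishes any relation between the values of $S$ at different primitive roots. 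In short, your plan correctly isolates where the difficulty lies, but the only tools it invokes --- reversal \eqref{6} and the generating function coming from \eqref{1} --- are the paper's own tools, which suffice for $t=1$ and demonstrably do not extend to the cyclic setting; the conjecture remains open after your argument.
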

Note that the case $t=1$ follows from Theorem \ref{T1} and Theorem \ref{T2}. The case of arbitrary $t$ when all $d_j$ are zeros follows from (\ref{2}) and (\ref{3}).

\section{Proof of Theorem \ref{T1}.}

Let $\overline{\bf s}=(s_r, s_{r-1}, \ldots, s_1)$ denote the reverse of ${\bf s}=(s_1, \ldots, s_{r-1}, s_r)$. Then we have the following relations.

\begin{lemma} \label{L1}
Let ${\bf s}=(s_1, \ldots, s_r)$ and ${\bf t}=(t_1, \ldots, t_r)$ be two $r$-tuples of non-negative integers, and $\zeta_n$ be an $n$-th primitive root of unity. Then
\begin{align*}
H_{n-1}({\bf s}; {\bf t}; \zeta_n) &=(-1)^{w({\bf s})}H_{n-1}(\overline{\bf s}; \overline{\bf s}-\overline{\bf t}; \zeta_n), \\
H_{n-1}^\star({\bf s}; {\bf t}; \zeta_n) &=(-1)^{w({\bf s})}H_{n-1}^\star(\overline{\bf s}; \overline{\bf s}-\overline{\bf t}; \zeta_n), 
\end{align*}
and in particular,
\begin{align}
z_{n}({\bf s}; \zeta_n) &=
(-1)^{w({\bf s})}H_{n-1}(\overline{\bf s}; \{1\}^r; \zeta_n), \label{5}\\
z_{n}^\star({\bf s}; \zeta_n) &=
(-1)^{w({\bf s})}H_{n-1}^\star(\overline{\bf s}; \{1\}^r; \zeta_n). \nonumber
\end{align}
\end{lemma}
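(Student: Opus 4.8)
The plan is to prove the (unstarred) identity by the order-reversing substitution $k_j \mapsto n - k_{r+1-j}$ on the summation indices, after which the starred version and the specialization (\ref{5}) follow immediately. First I would record the single algebraic fact on which everything rests: since $\zeta_n^n = 1$, for $1 \le k \le n-1$ we have $\zeta_n^{n-k} = \zeta_n^{-k}$, whence
$$[n-k]_{\zeta_n} = \frac{1 - \zeta_n^{-k}}{1 - \zeta_n} = \frac{-\zeta_n^{-k}(1-\zeta_n^k)}{1-\zeta_n} = -\zeta_n^{-k}\,[k]_{\zeta_n}.$$

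Next, set $l_i = n - k_{r+1-i}$. As $(k_1,\ldots,k_r)$ runs over all tuples with $1 \le k_1 < \cdots < k_r \le n-1$, the tuple $(l_1,\ldots,l_r)$ runs over exactly the same index set, so this is a bijection of the summation range. Writing $k_j = n - l_{r+1-j}$ and substituting, the denominator $\prod_j [k_j]_{\zeta_n}^{s_j}$ becomes, via the identity above,
$$(-1)^{w({\bf s})}\, \zeta_n^{-\sum_j s_j l_{r+1-j}} \prod_j [l_{r+1-j}]_{\zeta_n}^{s_j} = (-1)^{w({\bf s})}\,\zeta_n^{-\sum_i s_{r+1-i} l_i}\prod_i [l_i]_{\zeta_n}^{s_{r+1-i}},$$
where in the last step I reindexed by $i = r+1-j$ so that the exponents $s_{r+1-i}$ are precisely the entries of $\overline{\bf s}$. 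For the numerator, $k_j t_j = n t_j - l_{r+1-j} t_j$ together with $\zeta_n^{n t_j}=1$ collapses $\zeta_n^{\sum_j k_j t_j}$ to $\zeta_n^{-\sum_i t_{r+1-i} l_i}$, the exponents $t_{r+1-i}$ being the entries of $\overline{\bf t}$. Combining numerator and denominator, each summand equals $(-1)^{w({\bf s})}$ times $\zeta_n^{\sum_i l_i (s_{r+1-i} - t_{r+1-i})}/\prod_i [l_i]_{\zeta_n}^{s_{r+1-i}}$, which is exactly the summand of $H_{n-1}(\overline{\bf s}; \overline{\bf s} - \overline{\bf t}; \zeta_n)$. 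Summing over the common index set yields the first identity.

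The starred case is verbatim the same argument, since the bijection $l_i = n - k_{r+1-i}$ carries the weak chain $1 \le k_1 \le \cdots \le k_r \le n-1$ onto itself. Finally, the specialization (\ref{5}) is obtained by taking ${\bf t} = {\bf s} - \{1\}^r$, for which $\overline{\bf s} - \overline{\bf t} = \overline{\bf s} - (\overline{\bf s} - \{1\}^r) = \{1\}^r$, and likewise for the starred version.

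The only delicate point is the index bookkeeping: one must track the order reversal $j \leftrightarrow r+1-j$ consistently through the sign, the $q$-power exponents, and the denominator simultaneously, so that the reversed tuples $\overline{\bf s}$ and $\overline{\bf t}$ emerge in the correct slots. Beyond this purely combinatorial care, no serious obstacle is expected, as all the analytic content is concentrated in the elementary identity $[n-k]_{\zeta_n} = -\zeta_n^{-k}[k]_{\zeta_n}$.
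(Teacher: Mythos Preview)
Your proposal is correct and is essentially the same argument as the paper's own proof: both rest on the substitution $k_j\mapsto n-k_j$ together with reversal of the index order (which you package as the single bijection $l_i=n-k_{r+1-i}$), and both reduce the computation to the elementary identity $[n-k]_{\zeta_n}=-\zeta_n^{-k}[k]_{\zeta_n}$. Your write-up is slightly more explicit in the index bookkeeping, but the method and the key steps coincide with the paper's.
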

\begin{proof}
Replacing each $k_i$ by $n-k_i$ and reversing the order of summation, we get
\begin{equation*}
\begin{split}
H_{n-1}({\bf s}; {\bf t}; \zeta_n)&=\sum_{0<k_1<\cdots<k_r<n}\frac{\zeta_n^{t_1k_1+\ldots+t_rk_r}}{[k_1]_{\zeta_n}^{s_1}\dots [k_r]_{\zeta_n}^{s_r}}
=\sum_{0<n-k_1<\cdots<n-k_r<n}\frac{\zeta_n^{t_1(n-k_1)+\ldots+t_r(n-k_r)}}{[n-k_1]_{\zeta_n}^{s_1}\dots [n-k_r]_{\zeta_n}^{s_r}} \\
&=\sum_{0<k_r<k_{r-1}\cdots<k_1<n}\frac{\zeta_n^{-t_1k_1-\ldots-t_rk_r}}{[k_1]_{\zeta_n}^{s_1}\dots [k_r]_{\zeta_n}^{s_r}}\times (-1)^{w({\bf s})} \zeta_n^{k_1s_1+\ldots + k_rs_r} \\[3pt]
&=(-1)^{w({\bf s})}H_{n-1}(\overline{\bf s}; \overline{\bf s}-\overline{\bf t}; \zeta_n),
\end{split}
\end{equation*}
where we used  the identity
\begin{equation}
\label{6}
[n-k_i]_{\zeta_n}=\frac{1-\zeta_n^{n-k_i}}{1-\zeta_n}=\frac{1-\zeta_n^{-k_i}}{1-\zeta_n}=-\zeta_n^{-k_i}[k_i]_{\zeta_n}.
\end{equation}
Setting ${\bf t}={\bf s}-\{1\}^r$, we get (\ref{5}). The proofs for the multiple harmonic star sums are similar.
\end{proof}

\noindent{\bf Proof of Theorem \ref{T1}.}
We have
\begin{equation*}
\begin{split}
& \qquad z_n(\{2\}^a, 3, \{2\}^b; \zeta_n)+(1-\zeta_n)z_n(\{2\}^{a+b+1}) \\[3pt]
&=\sum_{0<k_1<\cdots<k_a}\frac{\zeta_n^{k_1+\ldots+k_a}}{[k_1]_{\zeta_n}^2\cdots [k_a]_{\zeta_n}^2}\sum_{k_a<k_{a+1}<k_{a+2}}\left(\frac{\zeta_n^{2k_{a+1}}}{[k_{a+1}]_{\zeta_n}^3}+
\frac{(1-\zeta_n)\zeta_n^{k_{a+1}}}{[k_{a+1}]_{\zeta_n}^2}\right) \\[3pt]
&\times\sum_{k_{a+1}<k_{a+2}<\cdots<k_{a+b+1}\leq n}\frac{\zeta_n^{k_{a+2}+\ldots+k_{a+b+1}}}{[k_{a+2}]_{\zeta_n}^2\cdots [k_{a+b+1}]_{\zeta_n}^2} \\[3pt]
&=H_{n-1}(\{2\}^a, 3, \{2\}^b; \{1\}^{a+b+1}; \zeta_n)=-z_n(\{2\}^b, 3, \{2\}^a; \zeta_n),
\end{split}
\end{equation*}
where in the last equality we used (\ref{5}). Hence
\begin{equation*}
z_n(\{2\}^a, 3, \{2\}^b; \zeta_n)+z_n(\{2\}^b, 3, \{2\}^a; \zeta_n)=-(1-\zeta_n)z_n(\{2\}^{a+b+1}),
\end{equation*}
which, by (\ref{2}), implies the theorem. \qed

\section{Proof of Theorem \ref{T2}.}

The $q$-binomial coefficient, or Gaussian coefficient, when $q$ is specified to a primitive root of unity has the following properties.

\begin{lemma} \label{L2}
Let $n>1$ be a positive integer. Then for any primitive $n$-th root of unity $\zeta_n$ and $1\le k <n$,
\begin{equation*}
\qbin{n-1}{k}=(-1)^k\zeta_n^{-\binom{k+1}{2}}.
\end{equation*}
\end{lemma}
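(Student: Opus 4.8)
The plan is to start from the standard product representation of the Gaussian binomial coefficient and evaluate it factor-by-factor using the reflection identity~(\ref{6}) already established in the proof of Lemma~\ref{L1}. Concretely, for $1\le k<n$ I would write
$$
\qbin{n-1}{k}=\prod_{i=1}^{k}\frac{1-\zeta_n^{n-i}}{1-\zeta_n^i}.
$$
Since $\zeta_n$ is a primitive $n$-th root of unity and $1\le i\le k<n$, each denominator $1-\zeta_n^i$ is nonzero, so this finite product is well-defined. This is the only place where primitivity of $\zeta_n$ and the restriction $k<n$ genuinely enter the argument.

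Next I would simplify each factor. Using $\zeta_n^{n-i}=\zeta_n^{-i}$ together with the relation $1-\zeta_n^{-i}=-\zeta_n^{-i}(1-\zeta_n^i)$ — which is exactly identity~(\ref{6}) after clearing the common factor $1-\zeta_n$ from numerator and denominator — every factor collapses to
$$
\frac{1-\zeta_n^{-i}}{1-\zeta_n^i}=-\zeta_n^{-i}.
$$

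Finally I would take the product over $i=1,\dots,k$, collecting the $k$ minus signs and summing the exponents:
$$
\qbin{n-1}{k}=\prod_{i=1}^{k}\left(-\zeta_n^{-i}\right)=(-1)^k\,\zeta_n^{-(1+2+\cdots+k)}=(-1)^k\,\zeta_n^{-\binom{k+1}{2}},
$$
which is the desired identity. The argument is in essence a termwise application of~(\ref{6}), so I do not anticipate a substantive obstacle; the only things to verify carefully are the well-definedness of the product noted above and the exponent bookkeeping, namely $\sum_{i=1}^{k}i=\binom{k+1}{2}$.
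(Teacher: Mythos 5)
Your proof is correct and is essentially identical to the paper's: both start from the product representation $\qbin{n-1}{k}=\prod_{j=1}^k\frac{1-\zeta_n^{n-j}}{1-\zeta_n^j}$, reduce each factor to $-\zeta_n^{-j}$ via the reflection identity~(\ref{6}), and sum the exponents to get $(-1)^k\zeta_n^{-\binom{k+1}{2}}$.
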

\begin{proof} We have
\begin{equation*}
\qbin{n-1}{k}=\prod_{j=1}^k\frac{[n-j]_{\zeta_n}}{[j]_{\zeta_n}}=\prod_{j=1}^k\frac{1-\zeta_n^{n-j}}{1-\zeta_n^j}=\prod_{j=1}^k\frac{1-\zeta_n^{-j}}{1-\zeta_n^j}
=\prod_{j=1}^k(-\zeta_n^{-j})=(-1)^k\zeta_n^{-\binom{k+1}{2}}.
\end{equation*}
\end{proof}
\noindent The proof of Theorem~\ref{T2} is based on the following multiple $q$-binomial identity.

\vspace{0.1cm}

\noindent {\bf Theorem A\,}{\rm (}\cite{HPT}, Thm.~8.1{\rm )}
{\it
Let $n, s_1, \ldots, s_r$ be positive integers. Then 
\begin{equation*}
\sum_{k=1}^n \qqbin{n}{k}(-1)^kq^{\binom{k+1}{2}}\sum_{1\leq k_1<k_2<\ldots<k_r=k}\,\prod_{i=1}^r\frac{q^{(s_i-1)k_i}}{[k_i]_q^{s_i}}
=(-1)^r\underset{j_i<j_{i+1}, \, i\in I} {\sum_{1\le j_1\le j_2\le\ldots\le j_w\le n}} \,\prod_{i=1}^w\frac{q^{j_i}}{[j_i]_q},
\end{equation*}
where $w=w({\bf s})=\sum_{i=1}^r s_i$, $I=\{s_1, s_1+s_2,  \ldots, s_1+s_2+\dots+ s_{r-1}\}$, and the sum on the right is taken over all integers
$j_1, \ldots, j_w$ satisfying the conditions $1\le j_i\le n$, $j_i<j_{i+1}$ for $i\in I$, and $j_i\le j_{i+1}$ otherwise.
}

From Theorem A we get a kind of duality for finite multiple $q$-harmonic sums $z_n$ at roots of unity. 

\begin{theorem} \label{T3}
Let $n, s_1, \ldots, s_r$ be positive integers. Then 
\begin{equation} \label{7}
z_n({\bf s}; \zeta_n)=(-1)^r\underset{j_i<j_{i+1}, \, i\in I} {\sum_{1\leq j_1\le j_2\le\ldots\le j_w<n}} \,\prod_{i=1}^w\frac{\zeta_n^{j_i}}{[j_i]_{\zeta_n}},
\end{equation}
where $w=w({\bf s})=\sum_{i=1}^r s_i$, $I=\{s_1, s_1+s_2,  \ldots, s_1+s_2+\dots+ s_{r-1}\}$, and the sum on the right is taken over all integers
$j_1, \ldots, j_w$ satisfying the conditions $1\le j_i\le n$, $j_i<j_{i+1}$ for $i\in I$, and $j_i\le j_{i+1}$ otherwise.
\end{theorem}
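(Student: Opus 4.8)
The plan is to apply Theorem A not with the parameter $n$ itself, but with $n$ replaced by $n-1$, and to specialize $q=\zeta_n$ only afterwards. A direct substitution $q=\zeta_n$ into Theorem A as stated is awkward: since $[n]_{\zeta_n}=0$, the Gaussian coefficient $\qbin{n}{k}$ vanishes for $1\le k<n$, while the terms with $k_r=n$ on the left and with some $j_i=n$ on the right become singular, so both sides acquire poles at $q=\zeta_n$. Passing to the $(n-1)$-version removes this difficulty: every $[k]_{\zeta_n}$ that then occurs has $1\le k\le n-1$, hence is nonzero, and Lemma \ref{L2} becomes directly applicable. One may assume $n\ge 2$; when $n\le r$ both sides of (\ref{7}) are empty sums and the identity is trivial.

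First I would record Theorem A with $n-1$ in place of $n$:
\begin{equation*}
\sum_{k=1}^{n-1} \qqbin{n-1}{k}(-1)^kq^{\binom{k+1}{2}}\sum_{1\leq k_1<\ldots<k_r=k}\prod_{i=1}^r\frac{q^{(s_i-1)k_i}}{[k_i]_q^{s_i}}
=(-1)^r\underset{j_i<j_{i+1}, \, i\in I}{\sum_{1\le j_1\le\ldots\le j_w\le n-1}}\prod_{i=1}^w\frac{q^{j_i}}{[j_i]_q}.
\end{equation*}
Setting $q=\zeta_n$, the right-hand side already coincides with the right-hand side of (\ref{7}), since the condition $j_w\le n-1$ is precisely $j_w<n$.

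The crux is the left-hand side. By Lemma \ref{L2}, for each $1\le k\le n-1$ one has $\qbin{n-1}{k}=(-1)^k\zeta_n^{-\binom{k+1}{2}}$, so the entire prefactor collapses to $1$:
\begin{equation*}
\qbin{n-1}{k}\,(-1)^k\zeta_n^{\binom{k+1}{2}}=(-1)^{2k}\zeta_n^{-\binom{k+1}{2}+\binom{k+1}{2}}=1.
\end{equation*}
Thus the left-hand side reduces to $\sum_{k=1}^{n-1}\sum_{1\le k_1<\ldots<k_r=k}\prod_{i=1}^r\zeta_n^{(s_i-1)k_i}/[k_i]_{\zeta_n}^{s_i}$, and merging the outer sum (over the largest index $k_r=k$) with the inner one gives $\sum_{1\le k_1<\ldots<k_r\le n-1}\prod_{i=1}^r\zeta_n^{(s_i-1)k_i}/[k_i]_{\zeta_n}^{s_i}$, which is exactly $H_{n-1}({\bf s};{\bf s}-\{1\}^r;\zeta_n)=z_n({\bf s};\zeta_n)$. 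Equating the two sides then yields (\ref{7}).

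I expect the only genuine obstacle to be the conceptual one of seeing that Theorem A should be invoked at level $n-1$; after that, the argument is a clean cancellation of the sign and the power of $\zeta_n$ supplied by Lemma \ref{L2}, followed by the routine recombination of the nested sum into the defining sum of $z_n$. No analytic or limiting considerations are needed, precisely because the shift to $n-1$ keeps all denominators away from zero.
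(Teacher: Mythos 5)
Your proposal is correct and follows essentially the same route as the paper, whose entire proof consists of the one-line instruction to replace $n$ by $n-1$, set $q=\zeta_n$ in Theorem A, and apply Lemma \ref{L2}; you have merely made explicit the cancellation $\qbin{n-1}{k}(-1)^k\zeta_n^{\binom{k+1}{2}}=1$, the recombination of the nested sum into $H_{n-1}({\bf s};{\bf s}-\{1\}^r;\zeta_n)=z_n({\bf s};\zeta_n)$, and the matching of the right-hand sides. Your added observation that the shift to level $n-1$ is what keeps all denominators $[j]_{\zeta_n}$, $1\le j\le n-1$, nonzero (so that specializing $q=\zeta_n$ is legitimate) is a sound justification of a point the paper leaves implicit.
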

\begin{proof}
To get (\ref{7}), we replace $n$ by $n-1$, and $q$ by a primitive root of unity $\zeta_n$ in Theorem A, and apply Lemma \ref{L2}.
\end{proof}

\noindent {\bf Proof of Theorem \ref{T2}.} Let ${\bf s}=(\{1\}^a, 2, \{1\}^b)$ and $w=w({\bf s})=a+b+2$. Applying Theorem~\ref{T3} and noticing that 
$I=\{1,2,\ldots, a, a+2, a+3, \ldots, a+b+1\}$, we get
\begin{equation*}
\begin{split}
-z_n(\{1\}^a,  \, & 2, \{1\}^b; \zeta_n)  =(-1)^{a+b}\sum_{1\leq j_1<j_2<\ldots<j_{a+1}\le j_{a+2}<j_{a+3}<\ldots <j_w< n}\, \prod_{i=1}^w\frac{\zeta_n^{j_i}}{[j_i]_{\zeta_n}} \\
& =(-1)^{a+b}\sum_{0<n-j_1<\ldots<n-j_{a+1}\le n-j_{a+2}<n-j_{a+3}<\ldots <n-j_w< n}\, \prod_{i=1}^w\frac{\zeta_n^{n-j_i}}{[n-j_i]_{\zeta_n}}.
\end{split}
\end{equation*}
Applying identity (\ref{6}), we get
\begin{equation*}
\begin{split}
-z_n(&\{1\}^a, 2, \{1\}^b; \zeta_n)  =\sum_{n> j_1 >\ldots>j_{a+1}\ge j_{a+2}>j_{a+3}>\ldots >j_w\ge 1}
\frac{1}{[j_1]_{\zeta_n}\cdots [j_w]_{\zeta_n}} \\
& = \sum_{n> j_1 >\ldots>j_{a+1}>j_{a+3}>\ldots >j_w\ge  1} 
\frac{1}{[j_1]_{\zeta_n}\cdots [j_a]_{\zeta_n}[j_{a+1}]_{\zeta_n}^2[j_{a+3}]_{\zeta_n}\cdots [j_w]_{\zeta_n}} +
z_n(\{1\}^w; \zeta_n).
\end{split}
\end{equation*}
Noticing that
\begin{equation*}
\frac{1}{[j_{a+1}]_{\zeta_n}^2}=\frac{(1-\zeta_n)[j_{a+1}]_{\zeta_n}+\zeta_n^{j_{a+1}}}{[j_{a+1}]_{\zeta_n}^2}=\frac{1-\zeta_n}{[j_{a+1}]_{\zeta_n}}+\frac{\zeta_n^{j_{a+1}}}{[j_{a+1}]_{\zeta_n}^2},
\end{equation*}
we obtain
\begin{equation*}
-z_n(\{1\}^a, 2, \{1\}^b; \zeta_n)  =(1-\zeta_n)z_n(\{1\}^{w-1}; \zeta_n)+z_n(\{1\}^b, 2, \{1\}^a; \zeta_n)+z_n(\{1\}^w; \zeta_n).
\end{equation*}
Therefore, by (\ref{1}),
\begin{equation*}
\begin{split}
z_n(\{1\}^a, 2, \{1\}^b; \zeta_n)&+z_n(\{1\}^b, 2, \{1\}^a; \zeta_n) =-(1-\zeta_n)z_n(\{1\}^{w-1}; \zeta_n)-z_n(\{1\}^w; \zeta_n) \\
&=-\frac{1}{n}(1-\zeta_n)\binom{n}{w}(1-\zeta_n)^{w-1}-\frac{1}{n}\binom{n}{w+1}(1-\zeta_n)^w \\
&=-\frac{1}{n}\binom{n+1}{w+1}(1-\zeta_n)^w.
\end{split}
\end{equation*}
\qed

\end{document}